\newcommand{\R}{\mathbb R}
\newcommand{\E}{\mathbb E}
\newcommand{\N}{\mathbb N}
\renewcommand{\P}{\mathbb P}
\DeclareMathOperator{\Var}{Var}
\theoremstyle{plain}
\newtheorem{satz}{Satz}[section]
\newtheorem{theo}[satz]{Theorem}
\theoremstyle{definition}
\theoremstyle{remark}
\newtheorem{remark}[satz]{Remark}
\newcommand{\dd}{{\rm d}}
\newcommand{\eee}{{\rm e}}
\begin{document}

\title[A characterization of the normal distribution]{A characterization of the normal distribution using stationary max-stable processes}

\author{Sebastian Engelke}
\address{Sebastian Engelke, Ecole Polytechnique F\'ed\'erale de Lausanne,
EPFL-FSB-MATHAA-STAT, Station 8, 1015 Lausanne, Switzerland
}
\email{sebastian.engelke@epfl.ch}
\author{Zakhar Kabluchko}
\address{Zakhar Kabluchko, Institute of Mathematical Statistics, University of M\"unster, Orl\'{e}ans-Ring 10, 48149 M\"unster, Germany}
\email{zakhar.kabluchko@uni-muenster.de}

\keywords{Smith max-stable process, stationarity, extreme value theory, multivariate normal distribution}
\subjclass[2010]{60G70,60G15}
\begin{abstract}
Consider the max-stable process
$\eta(t) = \max_{i\in\mathbb N} U_i \rm{e}^{\langle X_i, t\rangle - \kappa(t)}$, $t\in\mathbb{R}^d$,
where $\{U_i, i\in\mathbb{N}\}$ are points of the Poisson process with intensity $u^{-2}\rm{d} u$ on $(0,\infty)$, $X_i$, $i\in\mathbb{N}$, are independent copies of a random $d$-variate vector $X$ (that are independent of the Poisson process), and $\kappa:\mathbb{R}^d \to \mathbb{R}$ is a function. We show that the process $\eta$ is stationary if and only if $X$ has multivariate normal distribution and $\kappa(t)-\kappa(0)$ is the cumulant generating function of $X$. In this case, $\eta$ is a max-stable process introduced by R.\ L.\ Smith.

\end{abstract}

\maketitle

\section{Introduction}

The first class of max-stable processes that was used as a flexible
model for spatially distributed extreme events was introduced
by \cite{smi1990}. Let $N$ be a random vector having a zero-mean
$d$-variate normal distribution with covariance matrix $\Sigma$.
Further, let $N_i$, $i\in\N$, be independent copies of $N$.
Independently of the $N_i$'s, let $\{U_i, i\in\N\}$ be a Poisson point
process on $(0,\infty)$ with intensity $u^{-2}\dd u$.
The stochastic process
\begin{align}\label{smith}
M_\Sigma(t) := \max_{i\in\N}\, U_i \exp\left\{\langle N_i, t\rangle - \frac12 \langle t, \Sigma t\rangle  \right\}  , \quad t\in\R^d,
\end{align}
is now commonly termed the \textit{Smith process}. This process is
max-stable and stationary, where the former means that the pointwise
maximum $\frac 1n \max_{i=1}^n M_{\Sigma,i}$ of $n$ independent
copies $M_{\Sigma,1}, \dots, M_{\Sigma,n}$ of the process $M_\Sigma$
has the same finite-dimensional distributions as $M_\Sigma$ itself, for all $n\in\N$.
Recall also that a stochastic process $\{M(t), t\in\R^d\}$ is said to be stationary
if it has the same finite-dimensional distributions as the
shifted process $\{M(t+ h), t\in\R^d\}$, for all $h\in\R^d$.

\begin{figure}
  \begin{centering}
  \includegraphics[trim = 10mm 10mm 5mm 0mm, clip, width = 0.95 \textwidth]{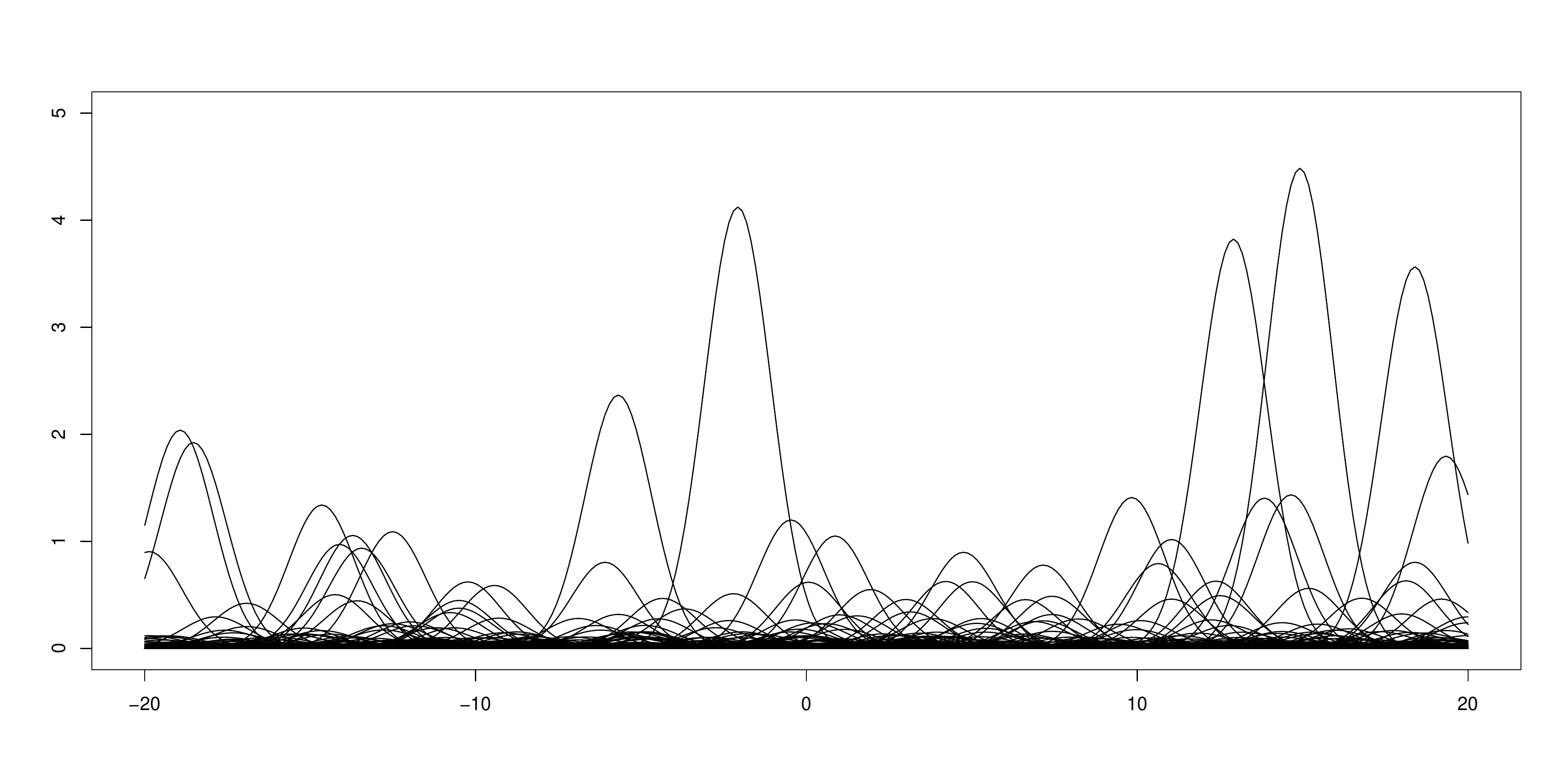}%
  \caption{Realization of the one-dimensional Smith process~\eqref{smith}.
  }
  \label{fig2}
  \end{centering}
\end{figure}

Thanks to its simple
form and the small number of parameters in low dimensions, the Smith process has become
a widely applied model in spatial extreme value statistics \citep{deh2006,eng2014,oes2015,pad2010,dav2012,wes2011} and it has been extended in several
directions \citep{deh2006,kab2009,smi2009,rob2013}. Smith processes also appeared in connection with convex hulls of independent and identically distributed samples~\citep{edg1981,hoh1996}. It is worth noting that
the class of Brown--Resnick processes introduced in \cite{kab2009}
includes the Smith process as a special case. In fact, instead
of the random parabolas $\langle N_i, t \rangle - \frac 12 \langle t , \Sigma t\rangle$
in \eqref{smith}, it is possible to consider independent copies
$Z_i$, $i\in\N$, of a zero-mean Gaussian process $\{Z(t), t\in\R^d\}$ with stationary increments and variance function $\sigma^2(t)=\Var Z(t)$.
The Brown--Resnick process \citep[cf.,][]{bro1977,kab2009}
\begin{align}\label{BR}
  M_{\text{BR}}(t) := \max_{i\in\N} \, U_i \exp\left\{ Z_i(t) - \frac 12 \sigma^2(t)   \right\}  , \quad t\in\R^d,
\end{align}
is stationary, max-stable and its distribution depends only on the so-called variogram $\gamma(t)= \Var (Z(t)-Z(0))$.
Choosing $Z$ to be the random linear Gaussian function $Z(t)=\langle N, t \rangle$ which has $\sigma^2(t) = \langle t, \Sigma t\rangle$, one recovers the Smith process.

In the case when the matrix $\Sigma$ is non-singular, the Smith process $M_\Sigma$ defined in~\eqref{smith}
can equivalently be represented as a moving maxima process \cite[cf.,][for instance]{wan2010}
\begin{align}\label{mmm}
  M_\Sigma(t) = \frac {\det(\Sigma)^{1/2}} {(2\pi)^{d/2}} \max_{i\in\N}\, V_i \exp\left\{ - \frac12 \langle (t-T_i), \Sigma(t- T_i)\rangle  \right\}.
\end{align}
Here, $\{(T_i, V_i), i\in\N\}$ is a Poisson point process with
intensity $\dd t \times v^{-2}\dd v$ on $\R^d\times (0,\infty)$.
In the moving maxima representation \eqref{mmm}, $M_\Sigma$ can be interpreted as the maximum over many ``storms'' where the $i$-th storm
has center point $T_i$ and strength $V_i$, and all storms have a common spatial shape
given by the $d$-variate normal
density with covariance matrix $\Sigma^{-1}$ (as opposed to $\Sigma$
in representation \eqref{smith}). Figure \ref{fig2} shows a one-dimensional stationary Smith process. Note in particular that because
of stationarity the origin does not play an exceptional role.

In this note we investigate the following question. If we drop the
assumption of Gaussianity in \eqref{smith} and replace the normally distributed random vector
$N$ by an arbitrary $d$-dimensional vector $X$, can we find a function
$\kappa: \R^d \to \R$ such that
the stochastic process
\begin{align}\label{eta}
  \eta(t) := \max_{i\in\N}\, U_i \exp\{\langle X_i, t\rangle - \kappa(t)\}  , \quad t\in\R^d,
\end{align}
is max-stable and stationary? Here, $\{U_i, i\in\N\}$ is a Poisson point
process on $(0,\infty)$ with intensity $u^{-2}\dd u$, and $X_i$, $i\in\N$, are independent copies of $X$ which are also independent of the Poisson process.
In fact, the max-stability of the process $\eta$
follows directly by the construction since~\eqref{eta} is the  de Haan representation~\citep{deh1984} of  the process $\eta$. The crucial part of the question above is the stationarity.

\section{Main result}
\begin{theo}\label{theo:main}
The process $\eta$ defined in~\eqref{eta} is stationary on $\R^d$ if and only if
$X$ has a $d$-variate normal distribution with some mean $\mu\in\R^d$ and covariance matrix $\Sigma$, and
\begin{equation}\label{eq:kappa}
\kappa(t)=\langle \mu, t\rangle +  \frac 12 \langle t, \Sigma t\rangle + \kappa(0).
\end{equation}
\end{theo}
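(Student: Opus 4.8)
The plan is to work entirely with the finite-dimensional distributions of $\eta$, which are explicit for a de~Haan representation. Writing $W(t)=\exp\{\langle X,t\rangle-\kappa(t)\}$ for the spectral function, a routine Poisson computation (the expected number of points $(U_i,X_i)$ violating the event equals $\E[\max_j W(t_j)/x_j]$) gives
\[
-\log\P\big(\eta(t_1)\le x_1,\dots,\eta(t_k)\le x_k\big)=\E\Big[\max_{1\le j\le k}\frac{W(t_j)}{x_j}\Big].
\]
The ``if'' direction is then immediate: when $X\sim N(\mu,\Sigma)$ and $\kappa$ is given by \eqref{eq:kappa}, the process $\eta$ equals $\eee^{-\kappa(0)}M_\Sigma$, a constant multiple of the Smith process \eqref{smith}, which was already noted to be stationary. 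For the ``only if'' direction I would use only the one- and two-dimensional marginals; since stationarity of $\eta$ forces stationarity of all of its marginals, this is enough.

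First the one-dimensional marginals: here $\P(\eta(t)\le x)=\exp\{-c(t)/x\}$ with $c(t)=\E[W(t)]=\exp\{\varphi(t)-\kappa(t)\}$, where $\varphi(t)=\log\E[\eee^{\langle X,t\rangle}]$ is the cumulant generating function of $X$. For $\eta$ to be well defined (finite a.s.) one needs $c(t)<\infty$, so the moment generating function of $X$ is finite on all of $\R^d$ and $\varphi$ is real-analytic. Stationarity of the one-dimensional marginals is equivalent to $c$ being constant, i.e.\ $\kappa(t)=\varphi(t)+\kappa(0)$. This identifies $\kappa$ up to the additive constant $\kappa(0)$ and reduces \eqref{eq:kappa} to showing that $\varphi$ is quadratic, equivalently that $X$ is Gaussian.

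The heart of the argument is the bivariate marginals, for which I would normalise $\kappa(0)=0$ so that $\E[W(t)]=1$. Using $\max(p,q)=\tfrac12(p+q+|p-q|)$ together with the shift-invariance of the linear part, stationarity of the two-dimensional distributions reduces to the requirement that
\[
\E\big|a\,W(s)-b\,W(t)\big|\quad\text{depend on }(s,t)\text{ only through }t-s,
\]
for all $a,b>0$. The key manoeuvre is an exponential tilt: with $\dd\P_s=W(s)\,\dd\P$ (a probability measure since $\E[W(s)]=1$) one has $\E|aW(s)-bW(t)|=\E_s|a-b\,W(t)/W(s)|$, and since the map $a\mapsto\E_s|a-Y|$ determines the law of any positive integrable $Y$, the condition becomes: for each fixed $v$, the law of $W(s+v)/W(s)$ under $\P_s$ is independent of $s$. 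I expect this reduction — spotting the tilt and the fact that $\E|a-Y|$ recovers the whole distribution — to be the main conceptual step.

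It then remains to read off the constraint on $\varphi$. The cumulant generating function of $\log\{W(s+v)/W(s)\}=\langle X,v\rangle-(\varphi(s+v)-\varphi(s))$ under $\P_s$ equals $\varphi(s+\lambda v)-\lambda\varphi(s+v)+(\lambda-1)\varphi(s)$; demanding that this be independent of $s$ and differentiating in $s$ yields
\[
\nabla\varphi(s+\lambda v)=\lambda\,\nabla\varphi(s+v)+(1-\lambda)\,\nabla\varphi(s),\qquad s,v\in\R^d,\ \lambda\in\R,
\]
so that $\nabla\varphi$ is affine along every line and hence, being smooth, affine on all of $\R^d$. Thus $\varphi$ is a quadratic polynomial $\langle\mu,t\rangle+\tfrac12\langle t,\Sigma t\rangle$, where $\mu=\E X$ and $\Sigma$ are the mean vector and covariance matrix of $X$; by uniqueness of the everywhere-finite moment generating function, $X$ must be $N(\mu,\Sigma)$. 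Together with $\kappa=\varphi+\kappa(0)$ this is precisely \eqref{eq:kappa}, completing the characterization.
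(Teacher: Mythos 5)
Your proof is correct, and while its endgame coincides with the paper's, the way you arrive at the functional equation for $\varphi$ is genuinely different. Both arguments share the same skeleton: stationarity of the one-dimensional margins forces $\kappa(t)=\varphi(t)+\kappa(0)$ where $\varphi$ is the (everywhere finite, hence smooth) cumulant generating function of $X$; stationarity of the bivariate margins then yields the identity that $\varphi\left((1-\lambda)t_1+\lambda t_2\right)-(1-\lambda)\varphi(t_1)-\lambda\varphi(t_2)$ is shift-invariant; differencing in the shift and passing to the gradient shows $\nabla\varphi$ is affine, so $\varphi$ is a quadratic, and uniqueness of distributions with everywhere-finite moment generating functions gives $X\sim N(\mu,\Sigma)$. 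The difference lies in how the bivariate stationarity condition is converted into that identity: the paper invokes Proposition~6 of \cite{kab2009} as a black box (a general criterion saying the cumulant generating functions of the spectral vectors, evaluated at convex weights, must be shift-invariant), whereas you derive the needed $n=2$ case from first principles via the explicit de~Haan formula, the identity $\max(p,q)=\tfrac12(p+q+|p-q|)$, the exponential tilt $\dd\P_s=W(s)\,\dd\P$, and the elementary fact that $a\mapsto\E|a-Y|$ determines the law of a positive integrable $Y$. Your route buys two things: the proof becomes self-contained (no reliance on the external stationarity criterion), and it makes explicit that two-dimensional stationarity alone forces normality, which the paper only records afterwards in Remark~\ref{rem1}. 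Two smaller dividends: your functional equation holds for all $\lambda\in\R$ rather than only for convex weights $\delta\in[0,1]$, so affinity of $\nabla\varphi$ follows at once without the paper's convex-and-concave argument; and identifying $\mu=\E X$, $\Sigma=\Cov(X)$ yields positive semidefiniteness of $\Sigma$ for free, where the paper appeals to convexity of $\varphi$. The cost is that the tilt identity and the law-determination lemma are extra probabilistic steps that must be checked, whereas the paper's citation shortcuts directly to the analytic identity.
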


\begin{proof}
If $X$ is $d$-variate normal with mean $\mu$ and covariance matrix $\Sigma$, and $\kappa(t)$ is given by~\eqref{eq:kappa}, then it is well known that $\eta$ is stationary; see for example Theorem~2 in \cite{kab2009}.

Let $\eta$ be stationary. We have to show that $X$ is normal and~\eqref{eq:kappa} holds.   Without restriction of generality let $\kappa(0) =0$ because otherwise we could replace $\kappa(t)$ by $\kappa(t) - \kappa(0)$ without changing the stationarity of $\eta$. Then, $\eta(0)$ has unit Fr\'echet distribution, i.e.,
$$
\P( \eta(0) \leq x) = \P(\max_{i\in\N} U_i \leq x) = \exp\{- 1/x\},
$$
for $x>0$. The stationarity of the one-dimensional margins of $\eta$ implies that
the cumulant generating function
\begin{align}\label{exp_moment}
    \varphi(t) := \log \E \eee^{\langle X, t\rangle}, \quad t\in\R^d,
\end{align}
is finite, and that $\kappa(t) = \varphi(t)$, for all $t\in\R^d$. Indeed,
  \begin{align*}
    \P( \eta(t) \leq x ) &=  \P\left( \max_{i\in\N}\, U_i \eee^{\langle X_i, t\rangle - \kappa(t)} \leq x \right)\\
    &= \exp\left\{- \int_0^\infty \P\left(\eee^{\langle X, t\rangle - \kappa(t)} > \frac xu  \right) \frac{\dd u}{u^2} \right\}\\
    &= \exp\left\{- \frac 1x \, \E \eee^{\langle X, t\rangle - \kappa(t)} \right\}.
  \end{align*}
  Thus, in fact, $\kappa(t) = \varphi(t)$ for all $t\in\R^d$.

  For $n\in\N$ and $t_1,\dots,t_n\in\R^d$, the cumulant generating function $\varphi_{t_1,\dots,t_n}$
  of the $n$-variate random vector $(\langle X, t_1\rangle - \varphi(t_1), \dots, \langle X, t_n\rangle  - \varphi(t_n))$
  is given by
  \begin{align}\label{laplace}
    \varphi_{t_1,\dots,t_n}(u_1,\dots, u_n)
    &:=
    \log \E \exp \left\{\sum_{i=1}^n (\langle X, t_i\rangle - \varphi(t_i))u_i\right\} \\
    &=
    \varphi\left(\sum_{i=1}^n u_it_i\right) - \sum_{i=1}^n u_i\varphi(t_i),
  \end{align}
  for all $u_1,\ldots, u_n\in\R$.
  Therefore, the general stationarity criterion for max-stable processes of the form~\eqref{eta} given in Proposition~6 in~\cite{kab2009}
  implies that $\eta$ is stationary, if and only if for all $h\in\R^d$ and all
  $u_1,\ldots, u_n\in[0,1]$ such that $\sum_{i=1}^n u_i = 1$ it holds that
  \begin{align}\label{laplace_cond}
    \varphi_{t_1,\dots,t_n}(u_1,\dots, u_n) = \varphi_{t_1+h,\dots,t_n+h}(u_1,\dots, u_n),
  \end{align}
  or, by \eqref{laplace},
  \begin{align}\label{stationarity}
     \varphi\left(\sum_{i=1}^n u_it_i\right) - \sum_{i=1}^n u_i\varphi(t_i)
     = \varphi\left(h + \sum_{i=1}^n u_it_i\right) - \sum_{i=1}^n u_i\varphi(t_i +h).
  \end{align}
  For arbitrary $t_1,t_2,h\in\R^d$ and $\delta\in[0,1]$ we have
  by \eqref{stationarity} that
  \begin{align*}
    \varphi\left( (1-\delta) t_1 + \delta t_2\right) - &(1-\delta)\varphi(t_1) - \delta\varphi(t_2)\\
     &= \varphi\left( (1-\delta)t_1 + \delta t_2 + h \right) - (1-\delta)\varphi(t_1+h) - \delta \varphi(t_2+h).
  \end{align*}
  Applying the above relation with $\varepsilon h$ instead of $h$ and rearranging the terms, we obtain that for every $\varepsilon > 0$,
  \begin{align*}
    &\frac{\varphi\left( (1-\delta)t_1 + \delta t_2  + \varepsilon h \right) - \varphi\left( (1-\delta) t_1 + \delta t_2\right)}{\varepsilon}\\
    &\hspace{3cm}= (1-\delta)\frac{\varphi(t_1+\varepsilon h) - \varphi(t_1)}{\varepsilon} + \delta \frac{\varphi(t_2+\varepsilon h) - \varphi(t_2)}{\varepsilon}.
  \end{align*}
  Note that the function $\varphi$ is infinitely differentiable by its definition~\eqref{exp_moment}. Sending $\epsilon \searrow 0$ on both sides gives
  \begin{align}\label{dot_equal}
    \langle\nabla \varphi\left( (1-\delta)t_1 + \delta t_2 \right), h\rangle =
    (1-\delta) \langle\nabla \varphi(t_1), h\rangle +  \delta\langle \nabla \varphi(t_2), h\rangle,
  \end{align}
  where $\nabla \varphi(t) \in \R^d$ denotes the gradient of $\varphi$
  at $t\in\R^d$.
  Since equation \eqref{dot_equal} holds for all $h\in\R^d$, we obtain
  \begin{align}\label{linear}
    \nabla \varphi\left( (1-\delta)t_1 + \delta t_2 \right) =
    (1-\delta) \nabla \varphi(t_1) +  \delta \nabla \varphi(t_2),
  \end{align}
  for all $t_1,t_2\in\R^d$ and $\delta\in[0,1]$. Thus, any of the $d$ components of the gradient $\nabla \varphi$ is both convex and concave and hence, an affine function.
  Consequently, there is a $d\times d$ matrix
  $\Sigma\in \text {Mat}_{d}(\R)$ and a vector $\mu\in\R^d$ such that
  \begin{align*}
    \nabla \varphi(t) = \Sigma  t + \mu.
  \end{align*}
  Observe that the function
  $\xi: \R^d \to \R$, $\xi(t) = \langle \mu, t\rangle + \frac 12\langle t, \Sigma t\rangle$, $t\in\R^d$, has the same gradient as $\varphi$. So, the difference between these two functions has gradient $0$ and we conclude that this difference is constant and, in fact, $0$ by the assumption $\varphi(0) = \kappa(0) = 0$. We obtain that
  \begin{align*}
    \kappa(t) = \varphi(t)  = \log \E\exp\{\langle X, t\rangle\} =  \langle \mu, t\rangle
    + \frac 12\langle t, \Sigma t\rangle \quad t\in\R^d.
  \end{align*}
It remains to observe that the matrix $\Sigma$ must be positive semidefinite because the function $\varphi$, being a cumulant generating function, is convex. Thus, $X$ has a $d$-variate normal distribution with mean vector $\mu$ and covariance
matrix $\Sigma$, which completes the proof.
\end{proof}

\begin{remark}\label{rem1}
For any random vector $X$ it is possible to make the one-dimensional distributions of the process $\eta$ defined in~\eqref{eta} stationary by choosing $\kappa(t)$ to be the cumulant generating function of $X$. However, the above proof shows that except in the case when $X$ is normal, it is not possible to make the two-dimensional distributions of $\eta$ stationary.
\end{remark}

\begin{remark}\label{rem2}
Similarly to the setting of the present paper one may ask whether in the definition of the Brown--Resnick process~\eqref{BR} we can replace $Z$ by some more general process (for example, a Gaussian process with non-stationary increments). 
This question was studied in~\cite{kab2010a}. 
\end{remark}

\begin{remark}
Theorem~\ref{theo:main} provides a characterization of the multivariate normal distribution based on max-stable processes.  See~\cite[page~151]{kotz} for a review of known characterizations of the multivariate normal law.
\end{remark}

\section*{Acknowledgements}
  Financial support from the Swiss National Science Foundation Project 161297 (first author) is gratefully acknowledged.

\bibliography{Engelke}
\bibliographystyle{plainnat}

\end{document}